\renewcommand\theenumi{\arabic{enumi}}
\begin{document}

\title[Associated Nijenhuis Tensors on Manifolds \ldots]
{Associated Nijenhuis Tensors on Manifolds with Almost Hypercomplex 
Structures and Metrics of Hermitian-Norden Type}
\author{MANCHO MANEV}

\address[1]{Department of Algebra and Geometry, Faculty of Mathematics and Informatics,
Plovdiv University\\ 236 Bulgaria Blvd, Plovdiv, 4027, Bulgaria}
\address[2]{Section of Mathematics and IT, Department of Pharmaceutical Science, Faculty of Pharmacy,
Medical University of Plovdiv \\ 15A Vasil Aprilov Blvd, Plovdiv, 4002, Bulgaria}

\email{mmanev@uni-plovdiv.bg}

\renewcommand\theenumi{(\roman{enumi})}
\newcommand{\f}{\varphi}
\newcommand{\n}{\nabla}
\newcommand{\lm}{\lambda}
\newcommand{\al}{\alpha}
\newcommand{\bt}{\beta}
\newcommand{\gm}{\gamma}
\newcommand{\om}{\omega}
\newcommand{\dt}{\delta}
\newcommand{\ta}{\theta}
\newcommand{\ep}{\varepsilon}
\newcommand{\ea}{\varepsilon_\alpha}
\newcommand{\eb}{\varepsilon_\beta}
\newcommand{\eg}{\varepsilon_\gamma}
\newcommand{\ba}{\bar\al}

\newcommand{\R}{\mathbb{R}}
\newcommand{\F}{\mathcal{F}}
\newcommand{\W}{\mathcal{W}}
\newcommand{\K}{\mathcal{K}}
\newcommand{\Id}{\mathrm{Id}}
\newcommand{\D}{\mathrm{d}}
\newcommand{\G}{\mathcal{G}}
\newcommand{\X}{\mathfrak{X}}
\newcommand{\LL}{\mathcal{L}}
\newcommand{\sx}{\mathop{\mathfrak{S}}\limits_{x,y,z}}
\newcommand{\s}{\mathfrak{S}}
\newcommand{\norm}[1]{\left\Vert#1\right\Vert ^2}
\newcommand{\M}{(M,\allowbreak\f_{\al},\allowbreak\xi_{\al},\allowbreak\eta_{\al},g)}
\newcommand{\fa}{(\f_{\al},\allowbreak\xi_{\al},\allowbreak\eta_{\al},g)}
\newcommand{\Lfa}{(L,\allowbreak\f_{\al},\allowbreak\xi_{\al},\allowbreak\eta_{\al},g)}
\newcommand{\Lfb}{(L,\allowbreak\f_{\bt},\allowbreak\xi_{\bt},\allowbreak\eta_{\bt},g)}
\newcommand{\Lfc}{(L,\allowbreak\f_{\gm},\allowbreak\xi_{\gm},\allowbreak\eta_{\gm},g)}
\newcommand{\thmref}[1]{The\-o\-rem~\ref{#1}}
\newcommand{\propref}[1]{Pro\-po\-si\-ti\-on~\ref{#1}}
\newcommand{\lemref}[1]{Lem\-ma~\ref{#1}}
\newcommand{\nT}{\norm{T}}
\newcommand{\nf}[1]{\norm{\n \f_{#1}}}
\newcommand{\corref}[1]{Corollary~\ref{#1}}
\newcommand{\ddt}{\frac{\D}{\D t}}

\newtheorem{thm}{Theorem}[section]
\newtheorem{lem}[thm]{Lemma}
\newtheorem{prop}[thm]{Proposition}
\newtheorem{cor}[thm]{Corollary}
\newtheorem{conv}[thm]{Convention}

\theoremstyle{definition}
\newtheorem{defn}{Definition}[section]
\newtheorem{rmk}[defn]{Remark}

\begin{abstract}
An associated Nijenhuis tensor of endomorphisms in the tangent bundle is introduced.
Special attention is paid to such tensors for an almost hypercomplex structure and the metric of Hermitian-Norden type.
There are studied relations between the six associated Nijenhuis tensors as well as their vanishing. It is given a geometric interpretation of the vanishing of these tensors as a necessary and sufficient condition for the existence of a unique connection with totally skew-symmetric torsion tensor.
Similar idea is used in the paper of T. Friedrich and S. Ivanov in Asian J. Math. (2002) for some other structures.
Finally, an example of a 4-dimensional manifold of the considered type with vanishing associated Nijenhuis tensors is given.
\end{abstract}

\keywords{Hypercomplex structure; Hermitian metric; Norden metric; totally skew-symmetric torsion.}

\subjclass[2010]{53C15, 53C50, 53C05}

\maketitle
%


\section{Introduction}

The main problem of the differential geometry of manifolds with additional tensor structures and metrics is the studying of linear connections with respect to which the structures and the metrics are parallel. Such connections are also known as \emph{natural} for the considered manifolds. Natural connections with totally skew-symmetric torsion are an object of interest in theoretical and mathematical physics; for example, in string theory, in supergravity theories, in supersymmetric sigma models with Wess-Zumino term (see \cite{FrIv02} and references therein).

The object of study in the present work are almost hypercomplex manifolds.
The vanishing of the Nijenhuis tensors as conditions for in\-te\-gra\-bility of the manifold are long-known (\cite{YaAk}).

The most popular case of investigations of the considered manifold is the case of an almost hypercomplex manifold equipped with a Hermitian metric, i.e. the three almost complex structures act as isometries with respect to the metric in each tangent fibre (cf. \cite{AlMa}).

An interesting counterpart of the Hermitian metric is the Norden metric
, i.e. when the almost complex structure acts as an antiisometry with respect to the metric on the tangent bundle. This metric is pseu\-do-Riemannian of neutral signature and its associated (0,2)-tensor with respect to the almost complex structure is also a Norden metric unlike in the Hermitian case it is the K\"ahler form.
In another point of view, the pair of the associated Norden metrics are the real and the imaginary parts of a complex Riemannian metric (\cite{LeB83,Manin,GaIv92}).
The basic classifications of the almost complex manifolds with Hermitian metric and with Norden metric are given in \cite{GrHe} and \cite{GaBo}, respectively.

In the case of metrics of Hermitian and Norden types, the almost hypercomplex structure $(J_\al)$ ($\al=1,2,3$) and the metric $g$ generate one almost complex structure with pseudo-Riemannian Hermitian metric (for $\al=1$) and two almost complex structures with Norden metrics (for $\al=2;3$). Almost hypercomplex manifolds with metrics of Hermitian and Norden types are studied in \cite{GriManDim12,GriMan24,Man28,ManGri32}.

The geometry of a metric connection with totally skew-symmetric torsion preserving the almost hypercomplex structure with Hermitian metric is introduced in \cite{HoPa}.
A connection of such type, considered on almost hypercomplex manifolds with Hermitian and Norden metrics, is investigated in \cite{ManGri32}.

According to \cite{FrIv02}, on an almost Hermitian manifold, there exists a linear connection which has a totally skew-symmetric torsion preserving the almost complex structure and the Hermitian metric if and only if the Nijenhuis (0,3)-tensor is a 3-form. These manifolds form the class $\G_1$ of the cocalibrated structures, according to \cite{GrHe}.

In \cite{Mek08}, it is proved that there exists a linear connection with totally skew-symmetric torsion preserving the almost complex structure and the Norden metric if and only if the manifold is quasi-K\"ahlerian with Norden metric or it belongs to the class $\W_3$ in the classification in \cite{GaBo}. Moreover, this class is determined by the condition for vanishing of the associated Nijenhuis tensor (\cite{GaBo}, \cite{Man48}).

The goal of the present work is to introduce of an appropriate tensor on an almost hypercomplex manifold and to establish that its vanishing is a necessary and sufficient condition for the existence of a linear connection with totally skew-symmetric torsion preserving the almost hypercomplex structure and the metric of Hermitian-Norden type.


\section{The associated Nijenhuis tensors of endomorphisms on the tangent bundle of a pseudo-Riemannian manifold}\label{sect-endo}

Let us consider a differentiable manifold $M$.
As it is well known (\cite{KoNo}), the Nijenhuis tensor $[J,K]$ for arbitrary tensor fields $J$ and $K$ of type (1,1) on $M$ is a tensor field of type (1,2) defined by
\begin{equation}\label{N-JK}
\begin{array}{l}
  2[J,K](X,Y) = (JK+KJ)[X,Y]\\
  \phantom{2[J,K](X,Y) = }
  +[J X,K Y]-J[K X, Y]-J[X,K Y]\\
  \phantom{2[J,K](X,Y) = }
  +[K X,J Y]-K[J X, Y]-K[X,J Y],
\end{array}
\end{equation}
where $[X,Y]$ is the commutator for arbitrary differentiable vector fields $X$, $Y$ on $M$, i.e. $X,Y\in\mathfrak{X}(M)$.

As a consequence, the Nijenhuis tensor $[J,J]$ for arbitrary tensor field $J$ is determined by
\begin{equation}\label{NJ}
[J, J](X, Y)=\left[JX,JY\right]+J^2\left[X,Y\right]-J\left[JX,Y\right]-J\left[X,JY\right].
\end{equation}

If $J$ is an almost complex structure, the Nijenhuis tensor of $J$ is determined by the latter formula for $J^2=-I$, where $I$ stands for the identity.

Let us suppose that $g$ is a pseudo-Rieman\-nian metric on $M$ and $\n$ is the Levi-Civita connection of $g$.
Then, the symmetric braces $\{X,Y\}$ are defined as $\{X,Y\}=\n_X Y+\n_Y X$ by
\[
\begin{split}
g(\{X,Y\},Z)&=g(\nabla_XY+\nabla_YX,Z)\\
            &=Xg(Y,Z)+Yg(X,Z)-Zg(X,Y)\\
&\phantom{=}
+g([Z,X],Y)+g([Z,Y],X),\qquad
\end{split}
\]
where $X,Y,Z\in\mathfrak{X}(M)$ \cite{ManIv36,IvManMan45}.

Besides, in \cite{Man48}, we define the following symmetric
(1,2)-tensor in analogy to \eqref{NJ} by
\[
\{J ,J\}(X,Y)=\{JX,JY\}-\{X,Y\}-J\{JX,Y\}-J\{X,JY\},
\]
where the symmetric braces $\{X,Y\}$
are used instead of the antisymmetric brackets $[X,Y]=\nabla_XY-\nabla_YX$.
The tensor $\{J,J\}$ is called the  \emph{associated
Nijenhuis tensor} of the almost complex structure $J$ on $(M,g)$ (\cite{Man48}).

Bearing in mind \eqref{N-JK}, we define the following  tensor of type (1,2) for tensor fields $J$ and $K$ of type (1,1):
\begin{equation}\label{1.1}
\begin{array}{l}
  2\{J,K\}(X,Y) = (JK+KJ)\{X,Y\}\\
  \phantom{2\{J,K\}(X,Y) = }
  +\{J X,K Y\}-J\{K X, Y\}-J\{X,K Y\}\\
  \phantom{2\{J,K\}(X,Y) = }
  +\{K X,J Y\}-K\{J X, Y\}-K\{X,J Y\}.
\end{array}
\end{equation}
Obviously, it is symmetric with respect to tensor (1,1)-fields, i.e.
\begin{equation}\label{sym}
\{J,K\}(X,Y)=\{K,J\}(X,Y).
\end{equation}
Moreover, it is symmetric with respect to vector fields, i.e.
\[
\{J,K\}(X,Y)=\{J,K\}(Y,X).
\]

We call the tensor $\{J,K\}$ as the \emph{associated
Nijenhuis tensor} of $J$ and $K$, the tensors of type (1,1) on $(M,g)$.

In the case when $J$ or $K$ coincides with the identity $I$, then \eqref{1.1} yields that the corresponding associated
Nijenhuis tensor vanishes, i.e.
\begin{equation}\label{1.1a}
  \{J,I\}(X,Y) = 0,\qquad \{I,K\}(X,Y) = 0.
\end{equation}

For the case of two identical tensor (1,1)-fields, \eqref{1.1} implies
\[
\begin{array}{l}
  \{J,J\}(X,Y) = J^2\{X,Y\}+\{J X,J Y\}
  -J\{J X, Y\}-J\{X,J Y\}.
\end{array}
\]

Let $L$ be also a tensor field of type (1,1) and $S$ be a tensor field of type (1,2).
Further, we use the following notation of Fr\"olicher-Nijenhuis from \cite{FrNi}
\begin{gather}
  (S\barwedge L)(X,Y)=S(LX,Y)+S(X,LY),\label{1.4}\\
  (L\barwedge S)(X,Y)=L\bigl(S(X,Y)\bigr).\label{1.5}
\end{gather}
According to \cite{YaAk}, the following properties of the latter notation are valid
\begin{gather}
  \bigl(S\barwedge J\bigr)\barwedge K - \bigl(S\barwedge K\bigr)\barwedge J =
  S \barwedge JK - S \barwedge KJ,\label{1.7}\\
  (J\barwedge S)\barwedge K = J\barwedge (S\barwedge K).\label{1.8}
\end{gather}
%
%

\begin{lem}
For arbitrary endomorphisms $J$, $K$ and $L$ in $\mathfrak{X}(M)$ and the relevant associated Nijenhuis tensors,  the following identity holds
\begin{equation}\label{1.6}
\begin{array}{l}
  \{J,KL\}+\{K,JL\}
  =\{J,K\}\barwedge L + J \barwedge \{K,L\} + K \barwedge \{J,L\}.
\end{array}
\end{equation}
\end{lem}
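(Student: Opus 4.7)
The identity \eqref{1.6} is a purely algebraic consequence of the definition \eqref{1.1} of the associated Nijenhuis tensor and the operations \eqref{1.4}, \eqref{1.5}. Its formal shape is the exact analogue --- with the symmetric braces $\{X,Y\}$ replacing the antisymmetric brackets $[X,Y]$ --- of a classical identity for the ordinary Nijenhuis tensor recorded in \cite{YaAk}. The plan is therefore a direct computation: evaluate both sides on arbitrary $X,Y\in\X(M)$ using only \eqref{1.1}, \eqref{1.4}, \eqref{1.5}, and match terms.

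First I would expand $2\{J,KL\}(X,Y)+2\{K,JL\}(X,Y)$ from \eqref{1.1}. This yields the ``anticommutator'' contribution $(JKL+KLJ+KJL+JLK)\{X,Y\}$, four mixed braces $\{JX,KLY\}$, $\{KX,JLY\}$, $\{KLX,JY\}$, $\{JLX,KY\}$, and eight further terms of the forms $J\{KLX,Y\}$, $J\{X,KLY\}$, $K\{JLX,Y\}$, $K\{X,JLY\}$, $KL\{JX,Y\}$, $KL\{X,JY\}$, $JL\{KX,Y\}$, $JL\{X,KY\}$, each carrying a minus sign.

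Next I would expand the right-hand side. For $2(\{J,K\}\barwedge L)(X,Y)$ I apply \eqref{1.4} and then \eqref{1.1}, which in particular produces the four ``extra'' contributions $(JK+KJ)\{LX,Y\}+(JK+KJ)\{X,LY\}$. For $2(J\barwedge\{K,L\})(X,Y)$ and $2(K\barwedge\{J,L\})(X,Y)$ I use \eqref{1.5} together with \eqref{1.1} and distribute $J$ (respectively $K$) through every term of $\{K,L\}$ and $\{J,L\}$.

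The decisive cancellations are mechanical. The $\{X,Y\}$-coefficient on the right, assembled from $J(KL+LK)\{X,Y\}$ and $K(JL+LJ)\{X,Y\}$, matches $JKL+KLJ+KJL+JLK$ on the left. The four terms $(JK+KJ)\{LX,Y\}+(JK+KJ)\{X,LY\}$ inside $\{J,K\}\barwedge L$ are annihilated exactly by $-JK\{LX,Y\}-JK\{X,LY\}$ coming from $J\barwedge\{K,L\}$ and $-KJ\{LX,Y\}-KJ\{X,LY\}$ coming from $K\barwedge\{J,L\}$. Similarly, the four terms $-J\{KX,LY\}$, $-J\{LX,KY\}$, $-K\{JX,LY\}$, $-K\{LX,JY\}$ inside $\{J,K\}\barwedge L$ cancel against $+J\{KX,LY\}$, $+J\{LX,KY\}$ inside $J\barwedge\{K,L\}$ and $+K\{JX,LY\}$, $+K\{LX,JY\}$ inside $K\barwedge\{J,L\}$. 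What survives on the right reproduces the left expansion term by term, which proves \eqref{1.6}.

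The main obstacle is not conceptual but combinatorial: each side carries roughly a dozen non-trivial tensor monomials and one must respect the non-commutativity of $J$, $K$, $L$ throughout. The bookkeeping is best organized by sorting the monomials according to which of $J$, $K$, $L$ appears outside the brace and which appears inside its arguments; with this ordering the cancellations proceed as just described.
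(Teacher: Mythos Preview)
Your proof is correct and follows essentially the same route as the paper: a direct expansion of both sides via \eqref{1.1}, \eqref{1.4}, \eqref{1.5} followed by term-by-term matching, with exactly the cancellations you identify. The only cosmetic difference is that the paper starts from the right-hand side and simplifies down to $\{J,KL\}+\{K,JL\}$, whereas you expand both sides separately and compare; the underlying computation is identical.
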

\begin{proof}
Using \eqref{1.1}, \eqref{1.4} and \eqref{1.5}, we get consequently
\[
\begin{array}{l}
  \bigl(\{J,K\}\barwedge L\bigr) (X,Y)+ \bigl(J \barwedge \{K,L\}\bigr)(X,Y) + \bigl(K \barwedge \{J,L\}\bigr)(X,Y)\\
  =\{J,K\} (LX,Y)+ \{J,K\} (X,LY)+ J\bigl( \{K,L\}(X,Y)\bigr)\\
  \phantom{=} + K\bigl( \{J,L\}(X,Y)\bigr)\\
  =\frac12\bigl[(JK+KJ) \{LX,Y\} +\{JLX,KY\} +\{KLX,JY\}\\ \qquad
   -J\{KLX,Y\}  -K\{JLX,Y\}  -J\{LX,KY\} -K\{LX,JY\}\\ \qquad
   +(JK+KJ) \{X,LY\} +\{JX,KLY\} +\{KX,JLY\}\\ \qquad
   -J\{KX,LY\}  -K\{JX,LY\}  -J\{X,KLY\} -K\{X,JLY\}\\ \qquad
   +J
   (KL+LK)\{X,Y\}+J\{K X,L Y\}+J\{L X,K Y\}\\ \qquad
  -JK\{L X, Y\}-JL\{K X, Y\}-JK\{X,L Y\}-JL\{X,K Y\}
   \\
   \qquad
   +K
   (JL+LJ)\{X,Y\}+K\{J X,L Y\}+K\{L X,J Y\}\\ \qquad
  -KJ\{L X, Y\}-KL\{J X, Y\}-KJ\{X,L Y\}-KL\{X,J Y\}
   \bigl] \\
   =\frac12\bigl[\{JLX,KY\} +\{KLX,JY\}
   -J\{KLX,Y\}  -K\{JLX,Y\} \\ \qquad
    +\{JX,KLY\} +\{KX,JLY\}
   -J\{X,KLY\} -K\{X,JLY\}\\ \qquad
   +J
   (KL+LK)\{X,Y\}
  -JL\{K X, Y\}-JL\{X,K Y\}
   \\
   \qquad
   +K
   (JL+LJ)\{X,Y\}
  -KL\{J X, Y\}-KL\{X,J Y\}
   \bigl] \\
  = \{J,KL\}(X,Y)+\{K,JL\}(X,Y).
\end{array}
\]
\end{proof}

\section{The associated Nijenhuis tensors of the almost hypercomplex structure and a pseudo-Riemannian metric}\label{sect-hyper}

Let $(M,g)$ be a differentiable manifold of dimension $4n$ and a pseudo-Riemannian metric $g$ on it. The manifold $M$ admits an almost hypercomplex structure  $(J_1,J_2,J_3)$, i.e. a triple of almost complex structures satisfying the properties:
\begin{equation}\label{J123} %
J_\al=J_\bt\circ J_\gm=-J_\gm\circ J_\bt, \qquad
J_\al^2=-I%
\end{equation} %
for all cyclic permutations $(\al, \bt, \gm)$ of $(1,2,3)$, where $I$
denotes the identity (\cite{Boy}, \cite{AlMa}).

\subsection{Relations between the associated Nijenhuis tensors }

The presence of three almost complex structures implies the existence of six associated Nijenhuis tensors -- three for each almost complex structure and three for each pair of almost complex structures --  namely
$\{J_1,J_1\}$, $\{J_2,J_2\}$, $\{J_3,J_3\}$,
$\{J_1,J_2\}$, $\{J_2,J_3\}$, $\{J_3,J_1\}$. In the present section we examine some relations between them, which we use later.

\begin{lem}\label{lem:3.1}
The following relations between the associated Nijenhuis tensors of an almost hypercomplex manifold are valid:
\begin{gather}
  \{J_3,J_1\} =\frac12 \{J_1,J_1\}\barwedge J_2 + J_1 \barwedge \{J_1,J_2\},\label{2.2}
\\
  \{J_3,J_1\}
  =-\{J_1,J_2\}\barwedge J_1 - J_1 \barwedge \{J_1,J_2\} - J_2 \barwedge \{J_1,J_1\},\label{2.3}
\\
 \begin{split}\label{2.5}
J_2 \barwedge \{J_1,J_1\}&+\frac12 \{J_1,J_1\}\barwedge J_2\\
&
+ 2 J_1 \barwedge \{J_1,J_2\}  +\{J_1,J_2\}\barwedge J_1
=0,
\end{split}
\\
  \{J_2,J_3\}
  =-\frac12 \{J_2,J_2\}\barwedge J_1 - J_2 \barwedge \{J_1,J_2\},\label{2.6}
\\
  \{J_2,J_3\}
  =J_1 \barwedge \{J_2,J_2\} + \{J_1,J_2\}\barwedge J_2 + J_2 \barwedge \{J_1,J_2\},\label{2.7}
\\
 \begin{split}\label{2.9}
J_1 \barwedge \{J_2,J_2\} &+\frac12 \{J_2,J_2\}\barwedge J_1\\
&
  + \{J_1,J_2\}\barwedge J_2 + 2 J_2 \barwedge \{J_1,J_2\}=0,
\end{split}
\\
\begin{split}\label{2.10}
  \{J_3,J_3\}-\{J_1,J_1\}
  =\{J_3,J_1\}\barwedge J_2 &+ J_3 \barwedge \{J_1,J_2\}\\
  &+ J_1 \barwedge \{J_2,J_3\},
\end{split}
\\
\begin{split}\label{2.12}
  \{J_3,J_3\}=\frac12\bigl(\{J_1,J_1\}
  &+\{J_3,J_1\}\barwedge J_2 - J_2 \barwedge \{J_3,J_1\}\phantom{\bigr),}\\
  &-\{J_2,J_3\}\barwedge J_1+ J_1 \barwedge \{J_2,J_3\}\bigr),
\end{split}
\\
\begin{split}\label{2.13}
\{J_1,J_1\}&-\{J_2,J_2\}+\{J_3,J_1\}\barwedge J_2 + J_2 \barwedge \{J_3,J_1\}\phantom{=0.\ }\\
&+ 2J_3 \barwedge \{J_1,J_2\}
+\{J_2,J_3\}\barwedge J_1+ J_1 \barwedge \{J_2,J_3\}=0,
\end{split}
\\
    \{J_2,J_2\}\barwedge J_2 =- 2 J_2 \barwedge \{J_2,J_2\}.\label{2.15}
\end{gather}
\end{lem}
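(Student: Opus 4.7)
The plan is to derive all ten identities from the Fr\"olicher--Nijenhuis-type identity \eqref{1.6} of the preceding lemma, applied to triples $(J, K, L)$ with entries in $\{J_1, J_2, J_3\}$ (possibly with repetition). Three pieces of structure do the work: the multiplication table \eqref{J123}, which rewrites every composition $J_\al J_\bt$ as $\pm J_\gm$ (or as $-I$ when $\al=\bt$); the vanishing \eqref{1.1a}, which kills any associated Nijenhuis tensor involving $\pm I$; and the symmetry \eqref{sym}, which allows interchanging the two $(1,1)$-slots of $\{\cdot,\cdot\}$.

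First I would dispatch the identities that come from a single application of \eqref{1.6}. The choice $J=K=L=J_2$, together with $J_2^2=-I$ and \eqref{1.1a}, collapses the left-hand side and gives \eqref{2.15}. Taking $J=K=J_1$, $L=J_2$ produces \eqref{2.2} via $J_1 J_2=J_3$; swapping the roles of $J_1$ and $J_2$ (so that $J_2 J_1=-J_3$ introduces a sign) produces \eqref{2.6}. The alternative choice $(J,K,L)=(J_1,J_2,J_1)$, for which the second term on the left-hand side of \eqref{1.6} dies by \eqref{1.1a} since $J_1 J_1=-I$, yields \eqref{2.3}, and the analogous $(J_2,J_1,J_2)$ yields \eqref{2.7}. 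Finally, $(J,K,L)=(J_3,J_1,J_2)$, together with $J_1 J_2=J_3$ and $J_3 J_2=-J_1$, puts $\{J_3,J_3\}-\{J_1,J_1\}$ on the left-hand side and delivers \eqref{2.10}.

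Next, I would equate the two expressions obtained above for the same Nijenhuis tensor: comparing \eqref{2.2} with \eqref{2.3} gives \eqref{2.5}, and comparing \eqref{2.6} with \eqref{2.7} gives \eqref{2.9}. For the remaining two identities I would apply \eqref{1.6} to the two cyclic siblings of the triple that produced \eqref{2.10}. Applied to $(J_2,J_3,J_1)$ it gives
\[
\{J_2,J_2\}-\{J_3,J_3\} = \{J_2,J_3\}\barwedge J_1 + J_2\barwedge\{J_3,J_1\} + J_3\barwedge\{J_1,J_2\},
\]
and applied to $(J_1,J_2,J_3)$ it gives
\[
\{J_1,J_1\}-\{J_2,J_2\} = \{J_1,J_2\}\barwedge J_3 + J_1\barwedge\{J_2,J_3\} + J_2\barwedge\{J_3,J_1\}.
\]
The sum of the first display with \eqref{2.10} rearranges directly to \eqref{2.13}. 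For \eqref{2.12} I would take a suitable linear combination of \eqref{2.10} and the first displayed identity above, then use \eqref{2.13} to eliminate the residual $\{J_2,J_2\}$ contribution.

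The main obstacle is the sheer bookkeeping: each instance of \eqref{1.6} expands into seven symmetric-brace terms, and the signs forced by $J_\al J_\bt = \pm J_\gm$ must be tracked consistently across both $J_\al\barwedge\{\cdot,\cdot\}$ and $\{\cdot,\cdot\}\barwedge J_\al$ compositions. The Fr\"olicher--Nijenhuis identities \eqref{1.7} and \eqref{1.8} are the natural tools should expressions need to be rearranged with the extra endomorphism on the opposite side of a $\barwedge$-product.
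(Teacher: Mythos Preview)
Your plan is correct and mirrors the paper's proof: every identity is obtained by specializing the master identity \eqref{1.6} to triples of the $J_\al$, together with \eqref{J123}, \eqref{sym} and \eqref{1.1a}, and the paired identities \eqref{2.5}, \eqref{2.9} come from equating the two expressions for $\{J_3,J_1\}$ and $\{J_2,J_3\}$. The only cosmetic differences are that the paper writes the triples for \eqref{2.3} and \eqref{2.7} as $(J_2,J_1,J_1)$ and $(J_1,J_2,J_2)$ (equivalent to yours by the $J\leftrightarrow K$ symmetry of \eqref{1.6}), and for \eqref{2.12}--\eqref{2.13} it invokes \eqref{2.10} together with its single cyclic sibling obtained by $J_1\mapsto J_3$, $J_2\mapsto J_1$, $J_3\mapsto J_2$, which is exactly your first displayed equation.
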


\begin{proof}
The identity \eqref{1.6} for $J=K=J_1$ and $L=J_2$ has the form in \eqref{2.2}, because of $J_1J_2=J_3$ from \eqref{J123} and \eqref{sym}.

On the other hand,  \eqref{1.6} for $J=J_2$ and $K=L=J_1$ implies
\[
  \{J_2,J_1J_1\}+\{J_1,J_2J_1\}
  =\{J_2,J_1\}\barwedge J_1 + J_2 \barwedge \{J_1,J_1\}
  + J_1 \barwedge \{J_2,J_1\}.
\]
Next, applying $J_1^2=-I$, $J_2J_1=-J_3$, which are corollaries of \eqref{J123}, as well as \eqref{sym} and \eqref{1.1a}, we obtain
\eqref{2.3}.

By virtue of \eqref{2.2} and \eqref{2.3} we get 
\eqref{2.5}.

Next, setting $J=K=J_2$ and $L=J_1$ in \eqref{1.6}, we find
\[
  \{J_2,J_2J_1\}+\{J_2,J_2J_1\}
  =\{J_2,J_2\}\barwedge J_1 + J_2 \barwedge \{J_2,J_1\}
   + J_2 \barwedge \{J_2,J_1\},
\]
which is equivalent to \eqref{2.6}, taking into account \eqref{sym} and $J_2J_1=-J_3$.

On the other hand, setting $J=J_1$ and $K=L=J_2$ in \eqref{1.6}, we find
\[
  \{J_1,J_2J_2\}+\{J_2,J_1J_2\}
  =\{J_1,J_2\}\barwedge J_2 + J_1 \barwedge \{J_2,J_2\}
  + J_2 \barwedge \{J_1,J_2\}
\]
and applying \eqref{1.1a} and $J_1J_2=J_3$, we have \eqref{2.7}.

By virtue of \eqref{2.6} and \eqref{2.7} we get 
\eqref{2.9}.

Now, setting $J=J_3$, $K=J_1$ and $L=J_2$ in \eqref{1.6} and applying $J_1J_2=J_3$, $J_3J_2=-J_1$ and \eqref{sym}, we have
\eqref{2.10}.

The equality \eqref{2.10} and the resulting equality from it by the substitutions $J_3$, $J_1$, $J_2$ for $J_1$, $J_2$, $J_3$, respectively, imply \eqref{2.12} and \eqref{2.13}.



At the end, the identity \eqref{1.6} for $J=K=L=J_2$ implies
\eqref{2.15} because of \eqref{1.1a}.
\end{proof}

\subsection{Vanishing of the associated Nijenhuis tensors}

In this subsection we focus our study on sufficient conditions for the vanishing of all associated Nijenhuis tensors.

As proof steps of the relevant main theorem, we need to prove a series of lemmas.

\begin{lem}\label{thm:2}
If $\{J_1,J_1\}=0$ and $\{J_2,J_2\}=0$, then $\{J_1,J_2\}=0$, $\{J_2,J_3\}=0$, $\{J_3,J_1\}=0$ and $\{J_3,J_3\}=0$.
\end{lem}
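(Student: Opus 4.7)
The plan is to extract, from the hypotheses $\{J_1,J_1\}=0$ and $\{J_2,J_2\}=0$, enough linear information about the single unknown $T:=\{J_1,J_2\}$ to force $T=0$, after which the remaining vanishings follow immediately from the relations of \lemref{lem:3.1}.

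First I would combine the pairs of identities that express $\{J_3,J_1\}$ and $\{J_2,J_3\}$ in two different ways. Setting $\{J_1,J_1\}=0$ in \eqref{2.2} and \eqref{2.3} yields
\[
\{J_3,J_1\}=J_1\barwedge T, \qquad T\barwedge J_1 + 2\,J_1\barwedge T=0,
\]
(the second relation is likewise a direct consequence of \eqref{2.5}). Analogously, putting $\{J_2,J_2\}=0$ in \eqref{2.6}, \eqref{2.7} (or \eqref{2.9}) gives
\[
\{J_2,J_3\}=-J_2\barwedge T,\qquad T\barwedge J_2 + 2\,J_2\barwedge T=0.
\]
So each of the unknown mixed Nijenhuis tensors is reduced to $T$ composed on the left with $J_1$ or $J_2$, and the ``$T\barwedge J_\al$'' operations are eliminated in favour of ``$J_\al\barwedge T$''.

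Next I would feed everything into \eqref{2.13}, which becomes, under $\{J_1,J_1\}=\{J_2,J_2\}=0$,
\[
(J_1\barwedge T)\barwedge J_2 + J_2\barwedge(J_1\barwedge T) + 2\,J_3\barwedge T + (-J_2\barwedge T)\barwedge J_1 + J_1\barwedge(-J_2\barwedge T)=0.
\]
Here the associativity property \eqref{1.8} lets me rewrite each ``outer'' $\barwedge J_\al$ as $J_\bt\barwedge(T\barwedge J_\al)$ and then use the two reduction relations above to turn every term into a scalar multiple of $(J_\bt J_\gm)\barwedge T$. With $J_1J_2=J_3$ and $J_2J_1=-J_3$ coming from \eqref{J123}, all five terms collapse onto $J_3\barwedge T$, and the coefficients are
$-2-1+2-2-1=-4$. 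Thus $-4\,J_3\barwedge T=0$, and because $J_3$ is an almost complex structure (hence invertible pointwise), $T=0$.

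Once $\{J_1,J_2\}=0$ is secured, the remaining vanishings are immediate: from \eqref{2.2} one has $\{J_3,J_1\}=J_1\barwedge T=0$; from \eqref{2.6} one has $\{J_2,J_3\}=-J_2\barwedge T=0$; and substituting $\{J_1,J_1\}=0$, $\{J_3,J_1\}=0$, $\{J_1,J_2\}=0$, $\{J_2,J_3\}=0$ into \eqref{2.10} yields $\{J_3,J_3\}=0$.

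The only non-routine step is the bookkeeping in the substitution into \eqref{2.13}: one has to keep track of two different meanings of $\barwedge$ (an endomorphism acting on values of a $(1,2)$-tensor versus the symmetrisation $S(LX,Y)+S(X,LY)$), and of the sign coming from $J_2J_1=-J_3$. A disciplined use of \eqref{1.8} to push every $J_\al$ to the left of $T$ turns the whole computation into a straightforward coefficient count and is where care is required.
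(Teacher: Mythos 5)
Your proposal is correct and follows essentially the same route as the paper: the reductions $\{J_3,J_1\}=J_1\barwedge T$, $\{J_2,J_3\}=-J_2\barwedge T$, the elimination relations $T\barwedge J_\al=-2J_\al\barwedge T$, and the substitution into \eqref{2.13} with \eqref{1.8} collapsing everything to $-4\,J_3\barwedge T=0$ are exactly the paper's steps (the paper merely organises the coefficient count in two stages, and uses \eqref{2.12} rather than \eqref{2.10} for the final vanishing of $\{J_3,J_3\}$, which is an immaterial difference).
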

\begin{proof}
The formulae \eqref{2.2} and \eqref{2.5}, because of $\{J_1,J_1\}=0$, imply respectively
\begin{gather}
  \{J_3,J_1\} = J_1 \barwedge \{J_1,J_2\},\label{3.1}\\
\{J_1,J_2\}\barwedge J_1
=-2 J_1 \barwedge \{J_1,J_2\}.\label{3.2}
\end{gather}

Similarly, since $\{J_2,J_2\}=0$, the equalities \eqref{2.6} and \eqref{2.9} take the corresponding form
\begin{gather}
  \{J_2,J_3\}
  =- J_2 \barwedge \{J_1,J_2\},\label{3.3}\\
\{J_1,J_2\}\barwedge J_2 =- 2 J_2 \barwedge \{J_1,J_2\}.\label{3.4}
\end{gather}

We set $\{J_1,J_1\}=0$, $\{J_2,J_2\}=0$, \eqref{3.1} and \eqref{3.3} in  \eqref{2.13} and obtain
\[
\begin{array}{l}
2J_3 \barwedge \{J_1,J_2\}
+\bigl(J_1 \barwedge \{J_1,J_2\}\bigr)\barwedge J_2 + J_2 \barwedge \bigl(J_1 \barwedge \{J_1,J_2\}\bigr)\\
  \phantom{2J_3 \barwedge \{J_1,J_2\}}
-\bigl(J_2 \barwedge \{J_1,J_2\}\bigr)\barwedge J_1- J_1 \barwedge \bigl(J_2 \barwedge \{J_1,J_2\}\bigr)=0.
\end{array}
\]
The latter equality is equivalent to
\begin{equation}\label{3.5}
\begin{array}{l}
\bigl(J_1 \barwedge \{J_1,J_2\}\bigr)\barwedge J_2
=\bigl(J_2 \barwedge \{J_1,J_2\}\bigr)\barwedge J_1,
\end{array}
\end{equation}
because of the following corollaries of \eqref{1.5} and the identities $J_3=J_1J_2=-J_2J_1$ from \eqref{J123}
\begin{equation}\label{3.5'}
\begin{array}{l}
J_2 \barwedge \bigl(J_1 \barwedge \{J_1,J_2\}\bigr)=
-J_1 \barwedge \bigl(J_2 \barwedge \{J_1,J_2\}\bigr)=
-J_3 \barwedge \{J_1,J_2\}.
\end{array}
\end{equation}

According to \eqref{1.8}, \eqref{3.2}, \eqref{3.4} and \eqref{3.5'}, the equality \eqref{3.5} yields
\[
\begin{array}{l}
0=
\bigl(J_1 \barwedge \{J_1,J_2\}\bigr)\barwedge J_2
-\bigl(J_2 \barwedge \{J_1,J_2\}\bigr)\barwedge J_1\\
\phantom{0}
=
J_1 \barwedge \bigl(\{J_1,J_2\}\barwedge J_2\bigr)
-J_2 \barwedge \bigl(\{J_1,J_2\}\barwedge J_1\bigr)\\
\phantom{0}
=
-2J_1 \barwedge \bigl(J_2\barwedge \{J_1,J_2\}\bigr)
+2J_2 \barwedge \bigl(J_1\barwedge \{J_1,J_2\}\bigr)\\
\phantom{0}
=
-4J_3 \barwedge \{J_1,J_2\},
\end{array}
\]
i.e. it is valid
\[
\begin{array}{l}
J_3 \barwedge \{J_1,J_2\}=0.
\end{array}
\]
Therefore, because of $J_3^2=-I$, we get
\[
\{J_1,J_2\}=0.
\]

Next, \eqref{3.1} and \eqref{3.3} imply
$\{J_3,J_1\}=0$ and $\{J_2,J_3\}=0$, respectively.
Finally, since $\{J_1,J_1\}$, $\{J_2,J_2\}$, $\{J_2,J_3\}$ and $\{J_3,J_1\}$ vanish, the relation
\eqref{2.12} yields $\{J_3,J_3\}=0$.
\end{proof}

\begin{lem}\label{thm:4}
If $\{J_1,J_1\}=0$ and $\{J_1,J_2\}=0$, then $\{J_2,J_2\}=0$, $\{J_3,J_3\}=0$, $\{J_2,J_3\}=0$, $\{J_3,J_1\}=0$.
\end{lem}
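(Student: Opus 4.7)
The plan is to feed the two vanishing hypotheses $\{J_1,J_1\}=0$ and $\{J_1,J_2\}=0$ into the identities collected in \lemref{lem:3.1}, pick off $\{J_3,J_1\}=0$ at once, reduce the remaining associated Nijenhuis tensors to expressions in $\{J_2,J_2\}$ alone, and then obtain two independent expressions for $\{J_3,J_3\}$ whose comparison forces $\{J_2,J_2\}=0$. All remaining vanishings then fall out, either directly or by appealing to \lemref{thm:2}.

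First, substituting both hypotheses into \eqref{2.2} (or equally well \eqref{2.3}) immediately yields $\{J_3,J_1\}=0$. Next, \eqref{2.7} with $\{J_1,J_2\}=0$ gives
\[
\{J_2,J_3\}= J_1\barwedge\{J_2,J_2\},
\]
while \eqref{2.9} with $\{J_1,J_2\}=0$ gives
\[
\{J_2,J_2\}\barwedge J_1=-2\,J_1\barwedge\{J_2,J_2\}.
\]
These two formulas are the engine: they express $\{J_2,J_3\}$ and the reordering $\{J_2,J_2\}\barwedge J_1$ in terms of $J_1\barwedge\{J_2,J_2\}$.

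Now I would plug these into \eqref{2.10} and \eqref{2.12} in turn. In \eqref{2.10}, clearing the terms that vanish by hypothesis or by the result $\{J_3,J_1\}=0$, one obtains $\{J_3,J_3\}=J_1\barwedge\{J_2,J_3\}=J_1\barwedge(J_1\barwedge\{J_2,J_2\})=-\{J_2,J_2\}$, using $J_1^2=-I$. In \eqref{2.12}, the same clean-up leaves $\{J_3,J_3\}=\tfrac12\bigl(-\{J_2,J_3\}\barwedge J_1+J_1\barwedge\{J_2,J_3\}\bigr)$; expanding the first summand via \eqref{1.8} as $J_1\barwedge(\{J_2,J_2\}\barwedge J_1)$ and then using the reordering identity above turns it into $2\{J_2,J_2\}$, while the second summand equals $-\{J_2,J_2\}$. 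Hence \eqref{2.12} yields $\{J_3,J_3\}=-\tfrac32\{J_2,J_2\}$.

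Comparing the two computed values of $\{J_3,J_3\}$ forces $\{J_2,J_2\}=0$, after which $\{J_3,J_3\}=0$ follows from either expression and $\{J_2,J_3\}=J_1\barwedge\{J_2,J_2\}=0$ from the recorded formula; alternatively, \lemref{thm:2} applied to $\{J_1,J_1\}=\{J_2,J_2\}=0$ closes the argument. The main subtlety I anticipate is picking \emph{independent} consequences: for instance \eqref{2.13} collapses to a tautology once $\{J_3,J_1\}=0$ and $\{J_2,J_3\}=J_1\barwedge\{J_2,J_2\}$ have been fed in, so it cannot itself yield $\{J_2,J_2\}=0$. Pairing \eqref{2.10} with \eqref{2.12} is essential precisely because those two identities contribute non-parallel scalar multiples of $\{J_2,J_2\}$ to $\{J_3,J_3\}$.
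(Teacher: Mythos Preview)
Your argument is correct, but it is organised differently from the paper's proof. Both routes end by producing two incompatible linear relations and deducing a vanishing, yet the pivot quantity and the ingredients differ. You first kill $\{J_3,J_1\}$ via \eqref{2.2}, then use \eqref{2.10} and \eqref{2.12} to write $\{J_3,J_3\}$ as $-\{J_2,J_2\}$ and as $-\tfrac32\{J_2,J_2\}$ respectively; this uses only relations already recorded in \lemref{lem:3.1}, but it needs both hypotheses $\{J_1,J_1\}=0$ and $\{J_1,J_2\}=0$ from the outset. The paper instead works with the quantity $\{J_2,J_2\}\barwedge J_3$: it invokes a fresh instance of \eqref{1.6} (with $J=K=J_2$, $L=J_3$) together with \eqref{2.7}, \eqref{2.9}, \eqref{2.15} and the Fr\"olicher--Nijenhuis identity \eqref{1.7} to obtain both $\{J_2,J_2\}\barwedge J_3=2\,J_3\barwedge\{J_2,J_2\}$ and $\{J_2,J_2\}\barwedge J_3=4\,J_3\barwedge\{J_2,J_2\}$, whence $\{J_2,J_2\}=0$; this step uses only $\{J_1,J_2\}=0$, the hypothesis $\{J_1,J_1\}=0$ entering only at the very end through \lemref{thm:2}. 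Your route is a little more self-contained (no appeal to \eqref{1.7} or to a new case of \eqref{1.6}); the paper's route isolates the slightly sharper intermediate fact that $\{J_1,J_2\}=0$ alone already forces $\{J_2,J_2\}=0$.
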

\begin{proof}
Setting $J=K=J_2$ and $L=J_3$ in \eqref{1.6}, using $J_2J_3=J_1$ and \eqref{sym}, we obtain
\[
  \{J_1,J_2\}
  =\frac12 \{J_2,J_2\}\barwedge J_3 + J_2 \barwedge \{J_2,J_3\}.
\]
Since $\{J_1,J_2\}=0$, we get
\begin{equation}\label{3.7}
  \{J_2,J_2\}\barwedge J_3 =-2 J_2 \barwedge \{J_2,J_3\}.
\end{equation}

The condition $\{J_1,J_2\}=0$ and \eqref{2.7} imply
\[
  \{J_2,J_3\}
  =J_1 \barwedge \{J_2,J_2\}.
\]
The latter equality and \eqref{3.7}, using $J_2J_1=-J_3$, yield
\[
  \{J_2,J_2\}\barwedge J_3 =-2 J_2 \barwedge \bigl(J_1 \barwedge \{J_2,J_2\}\bigr)
  =2 J_3\barwedge \{J_2,J_2\},
\]
that is
\begin{equation}\label{3.8}
  \{J_2,J_2\}\barwedge J_3   =2 J_3\barwedge \{J_2,J_2\}.
\end{equation}

On the other hand, setting $S=\{J_2,J_2\}$, $J=J_1$ and $K=J_2$ in \eqref{1.7} and using $J_1J_2=-J_2J_1=J_3$, we obtain
\begin{equation}\label{3.9}
\begin{array}{l}
  2\{J_2,J_2\} \barwedge J_3=
  \bigl(\{J_2,J_2\}\barwedge J_1\bigr)\barwedge J_2 - \bigl(\{J_2,J_2\}\barwedge J_2\bigr)\barwedge J_1.
\end{array}
\end{equation}

However, since $\{J_1,J_2\}=0$, then \eqref{2.9} and \eqref{2.10} imply respectively
\begin{equation}\label{3.10}
 \begin{array}{l}
\{J_2,J_2\}\barwedge J_1
=-2J_1 \barwedge \{J_2,J_2\}.
\end{array}
\end{equation}

Now, substituting \eqref{2.15} and \eqref{3.10} into \eqref{3.9}, we obtain
\[
\begin{array}{l}
  \{J_2,J_2\} \barwedge J_3=
  -\bigl(J_1 \barwedge \{J_2,J_2\}\bigr)\barwedge J_2 + \bigl(J_2 \barwedge \{J_2,J_2\}\bigr)\barwedge J_1
\end{array}
\]
and applying \eqref{1.8}, we have
\[
\begin{array}{l}
  \{J_2,J_2\} \barwedge J_3=
  -J_1 \barwedge \bigl(\{J_2,J_2\}\barwedge J_2\bigr) +J_2 \barwedge \bigl(\{J_2,J_2\}\barwedge J_1\bigr).
\end{array}
\]
In the latter equality, applying \eqref{2.15}, \eqref{3.10} and $J_1J_2=-J_2J_1=J_3$, we get
\[
\begin{array}{l}
  \{J_2,J_2\} \barwedge J_3=
  2J_1 \barwedge \bigl(J_2\barwedge \{J_2,J_2\}\bigr) -2J_2 \barwedge \bigl(J_1\barwedge \{J_2,J_2\}\bigr)\\
  \phantom{\{J_2,J_2\} \barwedge J_3}
  =4J_3 \barwedge \{J_2,J_2\},
\end{array}
\]
that is
\begin{equation}\label{3.11}
  \{J_2,J_2\} \barwedge J_3  =4J_3 \barwedge \{J_2,J_2\}.
  \end{equation}

Comparing \eqref{3.8} and \eqref{3.11}, we conclude that
\[
  J_3 \barwedge \{J_2,J_2\}=0,
\]
which is equivalent to
\[
\{J_2,J_2\}=0
\]
by virtue of $J_3^2=-I$. This completes the proof of the first assertion in the lemma.

Combining it with \lemref{thm:2}, 
we establish the truthfulness of the whole lemma. 
\end{proof}

\begin{lem}\label{thm:6}
If $\{J_1,J_2\}=0$ and $\{J_3,J_1\}=0$, then $\{J_1,J_1\}=0$, $\{J_2,J_2\}=0$, $\{J_3,J_3\}=0$, $\{J_2,J_3\}=0$.
\end{lem}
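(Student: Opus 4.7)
The plan is to reduce the hypothesis of \lemref{thm:6} to the hypothesis of \lemref{thm:4} by first extracting $\{J_1,J_1\}=0$ from the assumptions, and then invoking the earlier lemma to finish.

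The key observation is that identity \eqref{2.3} expresses $\{J_3,J_1\}$ as a sum of three terms, two of which involve $\{J_1,J_2\}$ and one of which is $-J_2\barwedge\{J_1,J_1\}$. First I would substitute the two hypotheses $\{J_1,J_2\}=0$ and $\{J_3,J_1\}=0$ into \eqref{2.3}; the left-hand side and the first two summands on the right vanish, leaving
\[
J_2\barwedge\{J_1,J_1\}=0.
\]
By the definition \eqref{1.5} this means $J_2\bigl(\{J_1,J_1\}(X,Y)\bigr)=0$ for every $X,Y\in\X(M)$. Since $J_2^2=-I$ by \eqref{J123}, the endomorphism $J_2$ is pointwise invertible, and therefore $\{J_1,J_1\}=0$.

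At this point I have $\{J_1,J_1\}=0$ together with the given $\{J_1,J_2\}=0$, which is exactly the hypothesis of \lemref{thm:4}. Applying that lemma immediately yields $\{J_2,J_2\}=0$, $\{J_3,J_3\}=0$, $\{J_2,J_3\}=0$ and $\{J_3,J_1\}=0$, completing the proof.

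I do not expect any serious obstacle: the entire argument is a one-line deduction from \eqref{2.3} plus an appeal to \lemref{thm:4}. The only point that requires care is choosing the right identity from \lemref{lem:3.1}, namely \eqref{2.3} rather than \eqref{2.2}, because the two tensors $\{J_1,J_2\}$ and $\{J_3,J_1\}$ appear together in \eqref{2.3} in the combination needed to isolate the factor $J_2\barwedge\{J_1,J_1\}$, whereas \eqref{2.2} would only give the weaker information $\{J_1,J_1\}\barwedge J_2=0$.
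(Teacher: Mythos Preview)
Your proof is correct and follows essentially the same route as the paper: substitute the two hypotheses into \eqref{2.3} to obtain $J_2\barwedge\{J_1,J_1\}=0$, use invertibility of $J_2$ to conclude $\{J_1,J_1\}=0$, and then invoke \lemref{thm:4}. Your additional remark explaining why \eqref{2.3} is the right identity (rather than \eqref{2.2}) is a nice touch that the paper does not make explicit.
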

\begin{proof}
From \eqref{2.3} and the vanishing of $\{J_1,J_2\}$ and $\{J_3,J_1\}$, we get
\[
  J_2 \barwedge \{J_1,J_1\}=0,
\]
which is equivalent to
\[
  \{J_1,J_1\}=0.
\]

Now, combining the latter assertion and \lemref{thm:4}, 
we have the validity of the present lemma. 
\end{proof}

\begin{lem}\label{thm:8}
If $\{J_1,J_1\}=0$ and $\{J_2,J_3\}=0$, then $\{J_2,J_2\}=0$, $\{J_3,J_3\}=0$, $\{J_1,J_2\}=0$, $\{J_3,J_1\}=0$.
\end{lem}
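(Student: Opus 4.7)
The strategy is to reduce to \lemref{thm:2} by proving the single extra identity $\{J_2,J_2\}=0$; the remaining four vanishings will then follow immediately from the case $\{J_1,J_1\}=\{J_2,J_2\}=0$ already settled there. This mirrors the pattern of \lemref{thm:4} and \lemref{thm:6}, each of which reduced to \lemref{thm:2} after a single new computation.

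First, I would feed $\{J_1,J_1\}=0$ into \eqref{2.2} to obtain
$\{J_3,J_1\}=J_1\barwedge\{J_1,J_2\}$. Next, applying the master identity \eqref{1.6} with $J=J_2$, $K=J_3$, $L=J_2$, and using $J_3J_2=-J_1$, $J_2^2=-I$ together with the antilinearity $\{J,-K\}=-\{J,K\}$ that follows from \eqref{1.1}, the vanishings $\{J,\pm I\}=0$ from \eqref{1.1a}, and the hypothesis $\{J_2,J_3\}=0$, I would extract
$\{J_1,J_2\}=-J_3\barwedge\{J_2,J_2\}$. Substituting this back into the expression for $\{J_3,J_1\}$ and using $J_1J_3=-J_2$ then yields the crucial relation
\[
\{J_3,J_1\}=J_2\barwedge\{J_2,J_2\}.
\]

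The final step is to compute $\{J_3,J_3\}$ in two different ways under the standing hypotheses. On one hand, substituting the displayed relation together with $\{J_1,J_2\}=-J_3\barwedge\{J_2,J_2\}$ and the identity \eqref{2.15} into \eqref{2.10} (with $\{J_1,J_1\}=\{J_2,J_3\}=0$) and using \eqref{1.8}, $J_3^2=-I$, $J_2^2=-I$ gives $\{J_3,J_3\}=3\{J_2,J_2\}$. On the other hand, the same substitutions in \eqref{2.12} produce $\{J_3,J_3\}=\tfrac32\{J_2,J_2\}$. Equating the two expressions forces $\{J_2,J_2\}=0$, whereupon \lemref{thm:2} applied to the pair $\{J_1,J_1\}=\{J_2,J_2\}=0$ closes out the four remaining assertions.

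The main obstacle is purely book-keeping: tracking the correct signs in the products $J_\al J_\bt=\pm J_\gm$ coming from \eqref{J123}, and keeping the Fr\"olicher--Nijenhuis operations $\barwedge$ organised so that the identities $(J\barwedge S)\barwedge L=J\barwedge(S\barwedge L)$ from \eqref{1.8}, $J^2\barwedge S=-S$ for $J\in\{J_1,J_2,J_3\}$, and the ``antilinearity'' $\{J,-K\}=-\{J,K\}$ can be applied without loss. Once the identity $\{J_3,J_1\}=J_2\barwedge\{J_2,J_2\}$ is in hand, the two computations of $\{J_3,J_3\}$ are short and their discrepancy produces $\{J_2,J_2\}=0$ directly.
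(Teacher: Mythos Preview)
Your argument is correct. The opening moves coincide with the paper's: both derive $\{J_3,J_1\}=J_1\barwedge\{J_1,J_2\}$ from \eqref{2.2} and $\{J_1,J_2\}=-J_3\barwedge\{J_2,J_2\}$ from \eqref{1.6} with $(J,K,L)=(J_2,J_3,J_2)$, then reduce to \lemref{thm:2} once $\{J_2,J_2\}=0$ is shown. The endgame, however, differs. The paper feeds these relations into \eqref{2.13} to obtain $\{J_1,J_2\}\barwedge J_2=0$, whence $\{J_2,J_2\}\barwedge J_2=0$ by \eqref{1.8}, and then \eqref{2.15} kills $\{J_2,J_2\}$. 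You instead take the extra step $\{J_3,J_1\}=J_2\barwedge\{J_2,J_2\}$ and compute $\{J_3,J_3\}$ twice, via \eqref{2.10} and \eqref{2.12}, obtaining $3\{J_2,J_2\}$ and $\tfrac32\{J_2,J_2\}$ respectively (both computations use \eqref{2.15} and \eqref{1.8} internally). Your route is slightly more symmetric and avoids \eqref{2.13}; the paper's route is a touch shorter since it does not need to evaluate $\{J_3,J_3\}$ at all. Either way the bookkeeping is of the same order and both proofs are complete.
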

\begin{proof}
Firstly, from \eqref{2.2} and $\{J_1,J_1\}=\{J_2,J_3\}=0$, we obtain
\begin{equation}\label{3.12}
  \{J_3,J_1\} = J_1 \barwedge \{J_1,J_2\}.
\end{equation}

Secondly, setting $J=K=J_2$ and $L=J_3$ in \eqref{1.6} and using $J_2J_3=J_1$, $\{J_1,J_1\}=\{J_2,J_3\}=0$ and \eqref{sym}, we get
\[
  2\{J_1,J_2\}
  =\{J_2,J_2\}\barwedge J_3.
\]

On the other hand, setting $J=L=J_2$ and $K=J_3$ in \eqref{1.6}, using the assumptions, as well as $J_2^2=-I$, $J_3J_2=-J_1$  and \eqref{1.1a},
we find
\[
  \{J_1,J_2\}
  = -J_3 \barwedge \{J_2,J_2\},
\]
which, because of $J_3^2=-I$, is equivalent to
  \begin{equation}\label{3.14}
  \{J_2,J_2\}= J_3\barwedge \{J_1,J_2\}.
  \end{equation}

Moreover, the formula \eqref{2.13} and $\{J_1,J_1\}=\{J_2,J_3\}=0$ yield
\[
2J_3 \barwedge \{J_1,J_2\}
+\{J_3,J_1\}\barwedge J_2 + J_2 \barwedge \{J_3,J_1\}-\{J_2,J_2\}=0.
\]
The latter equality, \eqref{3.12} and \eqref{3.14}, using $J_2J_1=-J_3$, \eqref{1.5} and \eqref{1.8}, imply
\[
J_1 \barwedge \bigl(\{J_1,J_2\}\barwedge J_2\bigr) =0
\]
and therefore
\begin{equation}\label{3.15}
\{J_1,J_2\}\barwedge J_2 =0.
\end{equation}

By virtue of \eqref{3.14}, \eqref{3.15} and \eqref{1.8}, we get
\begin{equation}\label{3.16}
  \{J_2,J_2\}\barwedge J_2= 0.
\end{equation}

On the other hand, \eqref{2.15} and \eqref{3.16} imply
\[
J_2 \barwedge \{J_2,J_2\}=0,
\]
which is equivalent to
\[
\{J_2,J_2\}=0
\]
and the first assertion in the present lemma is proved.

Combining it with \lemref{thm:2}, 
we obtain the validity of the rest equalities. 
\end{proof}

Now, we are ready to prove the main theorem in the present section.

\begin{thm}\label{thm:9}
If two of the six associated Nijenhuis tensors
\[
\{J_1,J_1\},\quad \{J_2,J_2\},\quad \{J_3,J_3\},\quad
\{J_1,J_2\},\quad \{J_2,J_3\},\quad \{J_3,J_1\}
\]
vanish, then the others also vanish.
\end{thm}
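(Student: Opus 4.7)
The plan is to reduce each of the $\binom{6}{2}=15$ possible pairs of vanishing associated Nijenhuis tensors to one of the four lemmas already established (\lemref{thm:2}, \lemref{thm:4}, \lemref{thm:6}, \lemref{thm:8}), by exploiting the invariance of the construction \eqref{1.1} under automorphisms of the almost hypercomplex structure. The key observation is that the lemmas were proved using only the algebraic relations \eqref{J123} and the definition \eqref{1.1}. Hence, for any other triple $(J_1',J_2',J_3')$ satisfying \eqref{J123}, the four lemmas apply verbatim after relabeling. The plan is therefore to produce, for every one of the 15 pairs, a triple obtained from $(J_1,J_2,J_3)$ by an admissible symmetry under which the pair is mapped to one of the four representative pairs.

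First I would organise the 15 pairs by type: three ``diagonal--diagonal'' pairs $\{J_\al,J_\al\}\,\{J_\bt,J_\bt\}$; three ``off--off'' pairs $\{J_\al,J_\bt\}\,\{J_\bt,J_\gm\}$; three ``opposite'' mixed pairs $\{J_\al,J_\al\}\,\{J_\bt,J_\gm\}$ with $\al\ne\bt,\gm$; and six ``sharing'' mixed pairs $\{J_\al,J_\al\}\,\{J_\al,J_\bt\}$. Next I would invoke the cyclic automorphism $(J_1,J_2,J_3)\mapsto(J_2,J_3,J_1)$, which obviously preserves \eqref{J123}. Applying it to each lemma produces three statements per lemma and so covers: all three diagonal--diagonal pairs (via \lemref{thm:2}); all three off--off pairs (via \lemref{thm:6}); all three opposite mixed pairs (via \lemref{thm:8}); and the three ``forward'' sharing pairs $\{J_1,J_1\}\{J_1,J_2\}$, $\{J_2,J_2\}\{J_2,J_3\}$, $\{J_3,J_3\}\{J_3,J_1\}$ (via \lemref{thm:4}). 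This accounts for $12$ of the $15$ pairs.

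To handle the remaining three ``backward'' sharing pairs $\{J_1,J_1\}\{J_3,J_1\}$, $\{J_2,J_2\}\{J_1,J_2\}$, $\{J_3,J_3\}\{J_2,J_3\}$, I would introduce the involutive ``swap-and-negate'' symmetry, e.g.\ $(J_1,J_2,J_3)\mapsto(J_2,J_1,-J_3)$, and verify directly from $J_\bt J_\gm=-J_\gm J_\bt=J_\al$ that it again satisfies \eqref{J123}. From the definition \eqref{1.1} one checks the elementary identity $\{-J,K\}(X,Y)=-\{J,K\}(X,Y)$, so vanishing hypotheses are preserved under such sign flips. Applying this symmetry to \lemref{thm:4} converts the hypothesis $\{J_1,J_1\}=\{J_1,J_2\}=0$ into $\{J_2,J_2\}=\{J_2,J_1\}=0$, i.e.\ the pair $\{J_2,J_2\}\{J_1,J_2\}$, and similarly yields the conclusion that all six tensors vanish; the two cyclic images of this relabeled lemma cover the remaining two backward sharing pairs.

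The main obstacle, as I see it, is not a deep computation but the bookkeeping needed to enumerate the $15$ pairs exhaustively and to justify that each targeted symmetry genuinely preserves both \eqref{J123} and the vanishing conditions on the associated Nijenhuis tensors. Once this classification-by-symmetry principle is accepted, the theorem is essentially a combinatorial corollary of the four lemmas.
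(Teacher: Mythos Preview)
Your proposal is correct and is essentially the same route as the paper's: the paper's proof is a single sentence citing \lemref{thm:2}, \lemref{thm:4}, \lemref{thm:6} and \lemref{thm:8}, leaving the reduction of all $\binom{6}{2}=15$ cases to these four representatives entirely implicit. What you have done is spell out that reduction---via the cyclic automorphism $(J_1,J_2,J_3)\mapsto(J_2,J_3,J_1)$ and the swap-and-negate involution $(J_1,J_2,J_3)\mapsto(J_2,J_1,-J_3)$, both of which preserve \eqref{J123} and, since the metric $g$ and hence the symmetric braces $\{\,\cdot\,,\,\cdot\,\}$ are held fixed, simply relabel the six tensors up to sign; your coverage of the fifteen pairs is complete and your verification that $\{-J,K\}=-\{J,K\}$ and $\{-J,-J\}=\{J,J\}$ is the only extra ingredient needed.
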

\begin{proof}
The truthfulness of this theorem follows from \lemref{thm:2}, \lemref{thm:4}, \lemref{thm:6} and \lemref{thm:8}.
\end{proof}


\section{Almost hypercomplex manifolds with metrics of Hermitian-Norden type having vanishing associated Nijenhuis tensors}\label{sect-assNije}

Let $g$ be a pseudo-Riemannian metric on an almost hypercomplex manifold $(M,J_1,J_2,J_3)$ defined by
\begin{equation}\label{gJJ} %
g(x,y)= \ea g(J_\al x,J_\al y),
\end{equation} %
where %
\[
 \ea=
\begin{cases}
\begin{array}{ll}
1, \quad & \al=1;\\
-1, \quad & \al=2,3.
\end{array}
\end{cases}
\]
Then, we call that the almost hypercomplex manifold is equipped with an HN-metric structure (HN is an abbreviation for Hermitian-Norden).
Namely, the metric $g$ is Hermitian for $\al=1$, whereas $g$ is a Norden metric in the cases  $\al=2$ and $\al=3$ (\cite{GriMan24}, \cite{ManGri32}).

Let us consider $(M,J_1,g)$ in 
the class $\G_1$ (the so-called cocalibrated manifolds with Hermitian
metric), 
according to the classification in \cite{GrHe}, as well as $(M,J_\al,g)$,
$\al=2;3$, in 
the class $\W_3$ (the so-called quasi K\"ahler manifolds with Norden
metric), according to the classification in \cite{GaBo}.
The mentioned classes are determined in
terms of the fundamental tensors $F_{\al}$, defined by $F_{\al}(x,y,z)=g((\n_xJ_\al)y,z)$ as follows:
\begin{eqnarray}
\nonumber
    \G_1(J_1)&:&\
    F_1 (x,x,z) = F_1 (J_1 x,J_1 x,z), \\
\nonumber
    \W_3(J_{\al})&:&\
    F_{\al} (x,y,z)+F_{\al} (y,z,x)+F_{\al} (z,x,y)=0, 
    \quad \al=2;3.
\end{eqnarray}

It is known from \cite{GrHe} that the class
$\G_1$ of almost Hermitian manifolds
$(M,J_1,g)$ exists in general form when the dimension of $M$ is at
least 6. For dimension 4, $\G_1$ is restricted to its subclass $\W_4$, the class of locally conformally equivalent manifolds to K\"ahler manifolds.
According to \cite{GaBo}, the lowest dimension for almost Norden manifolds in the class $\W_3$ is 4.
Thus, the almost hypercomplex HN-metric manifold belonging to the classes
$\G_1(J_1)$, $\W_3(J_2)$, $\W_3(J_3)$ exists in general form when $\dim{M}=4n\geq 8$.

The tensors $F_{\al}$ $(\al=1,2,3)$ have the following basic properties caused by the structures
\begin{equation}\label{Fa-prop}
  F_{\al}(x,y,z)=-\ea F_{\al}(x,z,y)=-\ea F_{\al}(x,J_{\al}y,J_{\al}z).
\end{equation}

Let the corresponding tensors of type (0,3) with respect to $g$ be denoted by
\[
\begin{array}{l}
[J_\al,J_\al](x,y,z)=g([J_\al,J_\al](x,y),z),\\
\{J_\al,J_\al\}(x,y,z)=g(\{J_\al,J_\al\}(x,y),z).
\end{array}
\]

The tensors $[J_\al,J_\al]$ and $\{J_\al,J_\al\}$ can be expressed by $F_{\al}$, as follows:
\begin{gather}
\begin{array}{l}
[J_\al,J_\al](x,y,z)=
F_{\al}(J_{\al} x,y,z)+\ea F_{\al}(x,y,J_{\al} z)
\label{enu}\\
\phantom{[J_\al,J_\al](x,y,z)=}
-F_{\al}(J_{\al} y,x,z)-\ea F_{\al}(y,x,J_{\al} z),
\end{array}\\
\begin{array}{l}
\{J_\al,J_\al\}(x,y,z)=
F_{\al}(J_{\al} x,y,z)+\ea F_{\al}(x,y,J_{\al} z)\\
\phantom{\{J_\al,J_\al\}(x,y,z)=}
+F_{\al}(J_{\al} y,x,z)+\ea F_{\al}(y,x,J_{\al}z).\label{enhat}
\end{array}
\end{gather}
In the case $\al=2;3$, the latter formulae are given in \cite{GaBo}, as $\{J_\al,J_\al\}$ coincides with the tensor $\widetilde N$
introduced there by an equivalent equality of \eqref{enhat}.

In \cite{GrHe}, it is given an equivalent definition of $\G_1$ by the condition the Nijenhuis tensor $[J_1,J_1](x,y,z)$ to be a 3-form, i.e.
\begin{equation}\label{G1-3f}
  \G_1(J_1):\quad [J_1,J_1](x,y,z)=-[J_1,J_1](x,z,y).
\end{equation}

\begin{thm}\label{thm:NN=Nhat}
For the Nijenhuis tensor and the associated Nijenhuis tensor of a pseudo-Riemannian almost Hermitian manifold $(M,J_1,g)$. The following assertions are equivalent:
\begin{enumerate}
  \item $\{J_1,J_1\}$ vanishes;
  \item $[J_1,J_1]$ is a 3-form;
  \item There exists a linear connection with totally skew-symmetric torsion preserving the structure $(J_1,g)$ and this connection is unique.
\end{enumerate}
\end{thm}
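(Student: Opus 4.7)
The plan is to split the three-way equivalence into two pieces: (2)~$\Leftrightarrow$~(3), which is already granted by the Friedrich--Ivanov theorem recalled in the introduction, and (1)~$\Leftrightarrow$~(2), which is an algebraic identity at the level of $F_1$. For (2)~$\Leftrightarrow$~(3) I would simply cite \cite{FrIv02}: on an almost Hermitian manifold there exists a (necessarily unique) metric connection with totally skew-symmetric torsion preserving $J_1$ precisely when $[J_1,J_1]$, viewed as a (0,3)-tensor via $g$, is a 3-form. Since $(M,J_1,g)$ is our pseudo-Riemannian almost Hermitian manifold, the same argument from \cite{FrIv02} applies and no further work is needed for this half.

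For (1)~$\Leftrightarrow$~(2) I would work directly with the explicit expressions \eqref{enu} and \eqref{enhat} specialised to $\al=1$ (so $\ea=1$), together with the two identities in \eqref{Fa-prop}. It is convenient to introduce the auxiliary tensor
\[
A(x,y,z):=F_1(J_1 x,y,z)+F_1(x,y,J_1 z),
\]
so that \eqref{enu} reads $[J_1,J_1](x,y,z)=A(x,y,z)-A(y,x,z)$ and \eqref{enhat} reads $\{J_1,J_1\}(x,y,z)=A(x,y,z)+A(y,x,z)$. The two relations in \eqref{Fa-prop} combine, after substituting $z\mapsto J_1 z$ and using $J_1^2=-I$, into the identity $F_1(x,y,J_1 z)=F_1(x,J_1 y,z)$; a short direct computation with this identity and the skew-symmetry of $F_1$ in its last two slots then yields $A(x,z,y)=-A(x,y,z)$, i.e.\ $A$ is automatically antisymmetric in $(y,z)$.

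Once $A$ is on the table the equivalence becomes a pure symmetric/antisymmetric decomposition on the pair $(x,y)$. Condition (1) says exactly that $A$ is skew in $(x,y)$; combined with the automatic skew-symmetry in $(y,z)$, this forces $A$ to be totally skew, whence $[J_1,J_1]=2A$ is a 3-form, giving (1)$\Rightarrow$(2). For the converse, if $[J_1,J_1](x,y,z)=-[J_1,J_1](x,z,y)$ then, after inserting the definition of $[J_1,J_1]$ in terms of $A$ and cancelling the two $A(x,\cdot,\cdot)$ terms via $A(x,z,y)=-A(x,y,z)$, one obtains $A(y,x,z)=-A(z,x,y)$, i.e.\ antisymmetry under the swap of the first and third slots; chaining this with the $(y,z)$-antisymmetry delivers $A(x,y,z)=-A(y,x,z)$, which is precisely $\{J_1,J_1\}=0$.

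I expect the only delicate point to be the opening observation that the right object to look at is $A$ and that the two properties in \eqref{Fa-prop} already encode a hidden skew-symmetry of $A$ in $(y,z)$; after that, both implications are just a three-line permutation argument and the theorem follows by assembling (1)$\Leftrightarrow$(2) with the Friedrich--Ivanov equivalence (2)$\Leftrightarrow$(3).
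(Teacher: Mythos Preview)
Your proposal is correct and follows essentially the same route as the paper: both reduce (1)$\Leftrightarrow$(2) to the identities in \eqref{Fa-prop} and the consequence $F_1(x,y,J_1 z)=F_1(x,J_1 y,z)$, and both invoke \cite{FrIv02} for (2)$\Leftrightarrow$(3). The only cosmetic difference is that the paper packages your permutation argument into the single identity $\{J_1,J_1\}(x,y,z)=[J_1,J_1](z,x,y)+[J_1,J_1](z,y,x)$, from which (1)$\Leftrightarrow$(2) is read off directly, whereas you treat the two implications separately via your auxiliary tensor $A$; the underlying computation is the same.
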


\begin{proof}
First it has to prove the following relation
\begin{equation}\label{NN=Nhat}
\{J_1,J_1\}(x,y,z)=[J_1,J_1](z,x,y)+[J_1,J_1](z,y,x).
\end{equation}
We compute the right hand side of \eqref{NN=Nhat} using \eqref{enu}. Applying \eqref{Fa-prop} and their consequence
\begin{equation}\label{FffF1}
  F_1(x,y,J_1z)=F_1(x,J_1y,z),
\end{equation}
we obtain
\[
\begin{array}{l}
[J_1,J_1](z,x,y)+[J_1,J_1](z,y,x)=
-F_1(J_1x,z,y)-F_1(x,z,J_1y)\\
\phantom{[J_1,J_1](z,x,y)+[J_1,J_1](z,y,x)=}
-F_1(J_1y,z,x)-F_1(y,z,J_1x).
\end{array}
\]
Using again \eqref{FffF1} and the first equality in \eqref{Fa-prop}, we establish that the right hand side of the latter equality is equal to $\{J_1,J_1\}(x,y,z)$, according to \eqref{enhat} for $\al=1$.
Thus, \eqref{NN=Nhat} is valid.

The relation \eqref{NN=Nhat} implies the equivalence between (i) and (ii), whereas
the equivalence between (ii) and (iii) follows from Theorem 10.1 in \cite{FrIv02}.
\end{proof}

\begin{thm}\label{thm:HKT}
The vanishing of two of the six associated Nijenhuis tensors for an almost hypercomplex structure is a necessary and suffi\-cient condition for existing a linear connection with totally skew-sym\-met\-ric torsion preserving the almost hypercomplex HN-metric structure.
\end{thm}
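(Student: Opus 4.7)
The plan is to reduce the statement to the three single-structure theorems, one for each $\al$, and then glue the resulting connections together using uniqueness. For sufficiency, I assume that two of the six associated Nijenhuis tensors vanish. By \thmref{thm:9} all six of them vanish simultaneously, and in particular $\{J_\al,J_\al\}=0$ for $\al=1,2,3$. This reduces the sufficiency to the case in which every self-associated Nijenhuis tensor is trivial.

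Next, I invoke \thmref{thm:NN=Nhat} for the Hermitian pair $(J_1,g)$: the vanishing of $\{J_1,J_1\}$ is equivalent to the existence and uniqueness of a linear connection $\n^{(1)}$ with totally skew-symmetric torsion preserving $(J_1,g)$. For the Norden pairs $(J_\al,g)$, $\al=2,3$, the analogous result recalled in the introduction from \cite{Mek08}, \cite{GaBo} and \cite{Man48} asserts that the vanishing of $\{J_\al,J_\al\}$ is equivalent to the existence and uniqueness of a connection $\n^{(\al)}$ with totally skew-symmetric torsion preserving $(J_\al,g)$. I thus obtain three connections $\n^{(1)},\n^{(2)},\n^{(3)}$, each metric and each of skew-symmetric torsion.

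The key step is to identify these three connections. I set $\n:=\n^{(1)}$ and aim to show that, additionally, $\n J_2=0$; the relation $\n J_3=0$ then follows from $J_3=J_1J_2$ and $\n J_1=\n J_2=0$. By uniqueness applied to the Norden pair $(J_2,g)$, the equality $\n J_2=0$ forces $\n=\n^{(2)}$, and similarly $\n=\n^{(3)}$. To verify $\n J_2=0$, I would compare the explicit torsion formulas: the Friedrich--Ivanov formula expressing $T^{(1)}$ in terms of $F_1$ and $[J_1,J_1]$ on one side, and the Norden-type formula expressing $T^{(2)}$ in terms of $F_2$ and $\{J_2,J_2\}$ on the other. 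The coincidence $T^{(1)}=T^{(2)}$ (after lowering indices with $g$) reduces, via the defining relation $F_\al(x,y,z)=g((\n_xJ_\al)y,z)$ and the symmetry properties \eqref{Fa-prop}, to algebraic identities among the six associated Nijenhuis tensors, all of which are supplied by \lemref{lem:3.1}.

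For necessity, a linear connection $\n$ preserving the full HN-metric structure preserves each single structure $(J_\al,g)$ with totally skew-symmetric torsion. Applying the three single-structure theorems in the reverse direction gives $\{J_\al,J_\al\}=0$ for $\al=1,2,3$, which yields three (and hence at least two) vanishing tensors among the six. The main obstacle is the identification step in the sufficiency direction: the Hermitian and Norden torsion formulas have different algebraic shapes, and matching them requires the full system of interlocking identities \eqref{2.2}--\eqref{2.15} together with the product relations $J_\al=J_\bt J_\gm=-J_\gm J_\bt$ from \eqref{J123}.
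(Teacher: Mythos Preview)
Your overall architecture is sound and matches the paper in two respects: you use \thmref{thm:9} to pass from two vanishing tensors to all six, and you invoke the single-structure existence/uniqueness theorems (\thmref{thm:NN=Nhat} for $\al=1$, \cite{Mek08} for $\al=2,3$) in both directions. The necessity argument is exactly the paper's.

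Where you diverge from the paper is in the sufficiency direction. The paper does \emph{not} attempt to identify the three connections $\n^{(1)},\n^{(2)},\n^{(3)}$ directly. Instead it translates the vanishing of $\{J_\al,J_\al\}$ into class membership --- $\G_1(J_1)$ via \thmref{thm:NN=Nhat} and \eqref{G1-3f}, $\W_3(J_2)$ and $\W_3(J_3)$ via \cite{GaBo} --- and then cites \cite{ManGri32}, where the existence of a single connection with totally skew-symmetric torsion preserving the full HN-structure is already established for manifolds lying in these classes. So the ``gluing'' you isolate as the main obstacle is handled entirely off-stage by that reference.

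Your alternative route is more self-contained in principle, but as written it has a genuine gap: the identification $\n^{(1)}=\n^{(2)}$ is only asserted, not proved. You say the coincidence $T^{(1)}=T^{(2)}$ ``reduces \ldots\ to algebraic identities among the six associated Nijenhuis tensors, all of which are supplied by \lemref{lem:3.1}'', but none of the relations \eqref{2.2}--\eqref{2.15} directly compares the Friedrich--Ivanov torsion (built from $[J_1,J_1]$ and the K\"ahler form) with the Norden torsion (built from $F_2$); these live in different algebraic frameworks and the translation between them is precisely the content of \cite{ManGri32}. If you want to avoid that citation, you would need to write out both torsion $3$-forms explicitly in terms of $F_1,F_2,F_3$, use the relations among the $F_\al$ forced by \eqref{J123} (also derived in \cite{GriManDim12,ManGri32}), and then check equality --- none of which is in the present paper. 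As it stands, your sketch is a plausible strategy but not a proof.
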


\begin{proof}

From \thmref{thm:NN=Nhat} and \eqref{G1-3f}, we obtain that 
the manifolds in the class $\G_1(J_1)$ are characterized by the condition $\{J_1,J_1\}=0$.

For almost Norden manifolds it is known from \cite{GaBo} that the manifolds in the class $\W_3(J_2)$ (respectively, $\W_3(J_3)$) are characterized by the condition $\{J_2,J_2\}=0$ (respectively, $\{J_3,J_3\}=0$).

In \cite{ManGri32}, it is proved that if
an almost hypercomplex HN-metric manifold  is in $\W_3(J_2)$ and $\W_3(J_3)$, then it
belongs to $\G_1(J_1)$.

From \thmref{thm:9} we have immediately that
if two of associated Nijenhuis tensors $\{J_1,J_1\}$, $\{J_2,J_2\}$, $\{J_3,J_3\}$ vanish, then the third one also vanishes.
%
Thus, we establish that
if an almost hypercomplex HN-metric manifold belongs to two of the classes
$\G_1(J_1)$, $\W_3(J_2)$, $\W_3(J_3)$
with respect to the corresponding almost complex structures, then it belongs also to the third class.

By virtue of \thmref{thm:9}, \thmref{thm:NN=Nhat}  and the comments above, we obtain the validity of the statement.
\end{proof}

\section{A 4-dimensional example}\label{sect-exa}

In \cite{GriMan24}, it is considered a connected Lie group $G$ with a corresponding Lie algebra $\mathfrak{g}$, 
determined by the following conditions for the global basis of left invariant vector fields $\{X_1,X_2,X_3,X_4\}$:
\[
\begin{array}{ll}
[X_1,X_3]= \lambda_2 X_2+\lambda_4 X_4,\qquad &
[X_2,X_4]= \lambda_1 X_1+\lambda_3 X_3, \\[0pt]
[X_3,X_2]= \lambda_2 X_1+\lambda_3 X_4,\qquad &
[X_4,X_3]= \lambda_4 X_1-\lambda_3 X_2, \\[0pt]
[X_4,X_1]= \lambda_1 X_2+\lambda_4 X_3,\qquad &
[X_1,X_2]= \lambda_2 X_3-\lambda_1 X_4,
\end{array}
\]
where $\lambda_i\in \mathbb{R}$ $(i=1,2,3,4)$ and
$(\lambda_1,\lambda_2,\lambda_3,\lambda_4)\neq (0,0,0,0)$.
The pseu\-do-Riemannian metric $g$ is defined by 
\begin{equation}\label{g}
\begin{array}{l}
  g(X_1,X_1)=g(X_2,X_2)=-g(X_3,X_3)=-g(X_4,X_4)=1, \\
  g(X_i,X_j)=0,\quad i\neq j.
\end{array}
\end{equation}
There, it is introduced an almost hypercomplex structure $(J_1,J_2,J_3)$ on $G$ as follows:
\begin{equation}\label{J123-ex}
\begin{array}{llll}
J_1X_1=X_2,\; & J_1X_2=-X_1,\; & J_1X_3=-X_4,\; & J_1X_4=X_3; \\
J_2X_1=X_3,\; & J_2X_2=X_4,\; & J_2X_3=-X_1,\; & J_2X_4=-X_2; \\
J_3X_1=-X_4,\; & J_3X_2=X_3,\; & J_3X_3=-X_2,\; & J_3X_4=X_1
\end{array}
\end{equation}
and then \eqref{J123} are valid.

In \cite{GrMaMe1}, it is constructed the manifold $(G,J_2,g)$ as an example of a 4-dimensional quasi-K\"ahler manifold with Norden metric. 

The conditions \eqref{g} and \eqref{J123-ex} imply the properties \eqref{gJJ} and therefore the introduced structure on $G$ is an almost hypercomplex HN-metric structure. Hence, it follows that the constructed manifold is
an almost hypercomplex HN-metric manifold.
Moreover, in \cite{GriMan24}, it is shown that this manifold belongs to basic classes $\W_4(J_1)$, $\W_3(J_2)$, $\W_3(J_3)$ with respect to the corresponding almost complex structures.

Bearing in mind the previous two sections, here we consider an example of a 4-dimen\-sion\-al manifold with vanishing associated Nijenhuis tensors for the almost hypercomplex structure. Therefore, in view of \thmref{thm:HKT}, there exists a linear connection with totally skew-sym\-met\-ric torsion preserving the almost hypercomplex HN-metric structure.

\section*{Acknowledgments}
The author wishes to thank Professor Stefan Ivanov for useful comments about this work.

The present paper was partially supported by project NI15-FMI-004 of the
Scientific Research Fund at the University of Plovdiv.


\end{document}